\newcommand{\gl}{{\mathfrak g \mathfrak l}}
\renewcommand{\u}{{\mathfrak u}}
\newcommand{\cx}{{\mathbb C}}
\newcommand{\diag}{\operatorname{diag}}
\newcommand{\re}{\operatorname{Re}}
\newcommand{\Ker}{\operatorname{Ker}}
\newcommand{\Jac}{\operatorname{Jac}}
\newcommand{\Mat}{\operatorname{Mat}}
\newcommand{\sgn}{\operatorname{sgn}}
\newcommand{\ol}{\overline}
\numberwithin{equation}{section}
\newtheorem{Theorem}{Theorem}[section]
\newtheorem*{Theorem*}{Theorem}
\newtheorem{Proposition}[Theorem]{Proposition}
 { \theoremstyle{definition}

\newtheorem{Remark}[Theorem]{Remark} }
\newcommand{\oC}{{\mathbb{C}}}
\newcommand{\oF}{{\mathbb{F}}}
\newcommand{\oH}{{\mathbb{H}}}
\newcommand{\oP}{{\mathbb{P}}}
\newcommand{\oR}{{\mathbb{R}}}
\newcommand{\oT}{{\mathbb{T}}}
\newcommand{\sF}{{\mathcal{F}}}
\newcommand{\sM}{{\mathcal{M}}} 
\newcommand{\sN}{{\mathcal{N}}}
\newcommand{\sO}{{\mathcal{O}}}
\begin{document}
\allowdisplaybreaks

\renewcommand{\thefootnote}{}

\newcommand{\arXivNumber}{2208.14936}

\renewcommand{\PaperNumber}{041}

\FirstPageHeading

\ShortArticleName{Deformations of Instanton Metrics}

\ArticleName{Deformations of Instanton Metrics\footnote{This paper is a~contribution to the Special Issue on Topological Solitons as Particles. The~full collection is available at \href{https://www.emis.de/journals/SIGMA/topological-solitons.html}{https://www.emis.de/journals/SIGMA/topological-solitons.html}}}

\Author{Roger BIELAWSKI~$^{\rm a}$, Yannic BORCHARD~$^{\rm a}$ and Sergey A.~CHERKIS~$^{\rm b}$}
\AuthorNameForHeading{R.~Bielawski, Y.~Borchard and S.~Cherkis}

\Address{$^{\rm a)}$~Institut f\"ur Differentialgeometrie, Leibniz Universit\"at Hannover,\\
\hphantom{$^{\rm a)}$}~Welfengarten 1, 30167 Hannover, Germany}
\EmailD{\href{mailto:bielawski@math.uni-hannover.de}{bielawski@math.uni-hannover.de}, \href{mailto:borchard@math.uni-hannover.de}{borchard@math.uni-hannover.de}}

\Address{$^{\rm b)}$~Department of Mathematics, The University of Arizona,\\
\hphantom{$^{\rm b)}$}~617 N. Santa Rita Ave., Tucson, AZ 85721-0089, USA}
\EmailD{\href{mailto:cherkis@math.arizona.edu}{cherkis@math.arizona.edu}}

\ArticleDates{Received January 25, 2023, in final form June 05, 2023; Published online June 13, 2023}

\Abstract{We discuss a class of bow varieties which can be viewed as Taub-NUT deformations of moduli spaces of instantons on {noncommutative} $\oR^4$. Via the generalized Legendre transform, we find the K\"ahler potential on each of these spaces.}

\Keywords{instanton; bow variety; hyperk\"ahler geometry; generalised Legendre transform}

\Classification{53C26; 53C28; 81T13}

\begin{flushright}
\begin{minipage}{50mm}
\it To Nicholas Stephen Manton\\ on his 70th birthday
\end{minipage}
\end{flushright}

\renewcommand{\thefootnote}{\arabic{footnote}}
\setcounter{footnote}{0}

Bow varieties, introduced by the third author \cite{Che1,Che2}, are a common generalisation of quiver varieties and of moduli spaces of solutions to Nahm's equations.
A class of bow varieties describes, via an analog of the ADHM construction, moduli spaces of (framed) instantons on ALF-spaces.
In the present paper, we are interested in a very particular type of bow varieties, which can be viewed as a moduli space of $U(r)$ instantons on the {noncommutative} Taub-NUT space (cf.~Section~\ref{bow}). The case $r=1$ of these has been studied by Takayama
 \cite{Tak}. Our approach is via spectral curves and line bundles.
 This allows us to give a formula for the K\"ahler potential of the hyperk\"ahler metric via the generalised Legendre transform of Lindstr\"om and Ro\v{c}ek~\cite{LR}.
 We also derive the asymptotic metric in the region where the $U(r)$-instantons of charge $k$ can be approximated by~$kr$ well-separated constituents (cf.\ \cite[Section~9]{Che2}), which we expect to be Euclidean $U(2)$-monopoles (cf. \cite{FR}).

\section{Spectral curves, line bundles, and matrix polynomials}

The complex manifold $\oT={\rm T}\oP^1$ is equipped with the standard atlas $(\zeta,\eta)$, $(\tilde \zeta,\tilde\eta)$, where $\tilde\zeta=\zeta^{-1}$, $\tilde\eta=\eta\,\zeta^{-2}$. We recall \cite[Proposition~2.2]{AHH} that $H^1(\oT,\sO_\oT)$ is generated by monomials of the form $\eta^i\zeta^{-j}$, $i>0$, $j<2i$. Of particular interest is the line bundle $\mathscr{L}^z$, $z\in\cx$, with transition function $\exp(z\eta/\zeta)$.

A {\em spectral curve} (of degree $k$) is a compact $1$-dimensional subscheme of ${\rm T}\oP^1$ defined by the equation $P(\zeta,\eta)=0$, where $P(\zeta,\eta)=\eta^k+\sum_{i=1}^kp_i(\zeta)\eta^{k-i}$, $\deg p_i=2i$. It can be reducible or nonreduced, and its arithmetic genus $g$ is equal to $(k-1)^2$.

On a spectral curve $S$, we consider the Jacobian $\Jac^{g-1}(S)$ of line bundles $L$ (i.e.,  invertible sheaves) of degree $g-1=k^2-2k$, i.e., satisfying $\chi(L)=0$. The line bundle $\sO_S(k-2)$ has degree $g-1$, and therefore we have an isomorphism ${\rm Pic}^0(S)\to \Jac^{g-1}(S)$, $L\mapsto L(k-2)$.
As shown in \cite[Proposition~2.1]{AHH}, any line bundle on $S$ of degree zero is a restriction of a line bundle on $\oT$, and hence, the same holds for line bundles of degree $g-1$. The theta divisor $\Theta_S\subset \Jac^{g-1}(S)$ consists of line bundles with nontrivial cohomology. Beauville \cite{Beau} has shown that any $L\in \Jac^{g-1}(S)\backslash \Theta_S$, viewed as a sheaf on $\oT$, has a free resolution of the form
\begin{equation}
\begin{tikzcd}
& 0 \ar[r] &\sO_\oT(-3)^{\oplus k}\ar[r,"\eta-A(\zeta)"] &[6mm] \sO_\oT(-1)^{\oplus k}\ar[r]& L\ar[r] & 0,
\end{tikzcd}\label{res}
\end{equation}
where $A(\zeta)=A_0+A_1\zeta+A_2\zeta^2$, $A_i\in \Mat_{k,k}(\cx)$, is a quadratic matrix polynomial, the characteristic polynomial of which is $P(\zeta,\eta)$. The essential idea is that, since $\pi\colon S\to\oP^1$, $(\zeta,\eta)\mapsto\zeta$, is a finite flat morphism, and $L$ is locally free, the direct image $\pi_\ast L$ is also locally free. Since $h^0(L)=h^1(L)=0$, the same holds for $\pi_\ast L$, and so $\pi_\ast L\simeq \sO(-1)^{\oplus k}$. Moreover, $\pi_\ast L$ is a module over $\pi_\ast S$, i.e., it corresponds to a homomorphism $A\colon \pi_\ast L\to \pi_\ast L(2)$ satisfying $P(\zeta,A(\zeta))=0$. Since $L$ is a line bundle, the matrix $A(\zeta)$ is regular for every $\zeta$, and hence $P(\zeta,\eta)$ is the characteristic polynomial of $A(\zeta)$.

\begin{Remark}
The Beauville correspondence described above can be also rephrased as follows. Consider the set $Q$ of quadratic matrix polynomials $A(\zeta)$ such that $A(\zeta_0)$ is a regular matrix for every $\zeta_0\in\oP^1$. This is an open subset of $\cx^{3k^2}$ and since ${\rm GL}_n(\cx)$ is reductive, there exists a good quotient $\mathscr{J}_k=Q/{\rm GL}_k(\cx)$. This quotient, with its scheme structure, can be viewed as the universal Jacobian of spectral curves, parametrising pairs $(S,L)$, where $S$ is a spectral curve and $L\in \Jac^{g-1}(S)\backslash \Theta_S$. It can also be viewed as an open subset of Simpson's moduli space of semistable $1$-dimensional sheaves on the Hirzebruch surface $\oF_2$ with Hilbert polynomial $h(m)=km$ \cite{Simp}. \label{moduli}
\end{Remark}

\subsection{Real structures} The manifold $\oT$ is equipped with a real structure (i.e., an antiholomorphic involution) $\sigma$ defined by
\[
\sigma(\zeta,\eta)=\bigl(-1/\bar\zeta,-\bar\eta/\bar\zeta^2\bigr).
\]
If a spectral curve $S$ is real (i.e., $\sigma$-invariant), then we obtain an induced antiholomorphic involution $\sigma$ on ${\rm Pic}(S)$. This involution corresponds to complex conjugation of the matrix polynomial in \eqref{res} \cite[Section~1.2]{theta}. Since we are interested in Hermitian conjugation, we need to replace $\sigma$ by the following antiholomorphic conjugation on $\Jac^{g-1}(S)$:
\[
L\mapsto\sigma(L)^\ast\otimes \sO_S(2k-4).
\]
We denote the invariant subset of $\Jac^{g-1}(S)$ by $\Jac^{g-1}_\oR(S)$ and the corresponding subset of $\mathscr{J}_k$ (cf.\ Remark \ref{moduli})
by $\mathscr{J}_k^\oR$. A line bundle $L$ belongs to $\Jac^{g-1}_\oR(S)$ if and only if it is of the form $L_0(k-2)$, where $L_0$ is a degree $0$ line bundle with transition function $\exp q(\zeta,\eta)$ satisfying $\ol{q(\zeta,\eta)}=q(\sigma(\zeta,\eta))$.

It has been shown in \cite[Proposition~1.7]{theta} that $\mathscr{J}_k^\oR$ decomposes into disjoint subsets $\mathscr{J}_k^p$, $p=0,\dots,[k/2]$,
corresponding to standard Hermitian forms $q=-\sum_{i=1}^p |z_i|^2+\sum_{i=p+1}^k|z_i|^2$ of signature $(p,k-p)$ on $\cx^k$. Denoting by $q$ also the diagonal matrix defining the quadratic form, $\mathscr{J}_k^p$ consists of ${\rm SU}(p,k-p)$-conjugacy classes of quadratic matrix polynomials $A(\zeta)$ which satisfy
\[
qA_0q^{-1}=-A_2^\ast,\qquad qA_1q^{-1}=A_1^\ast,\qquad qA_2q^{-1}=-A_0^\ast.
\]

\begin{Remark} Equivalently, the component $\mathscr{J}_k^p$ to which a real $(S,L)$ belongs is determined by the signature of Hitchin's metric on $H^0(S,L(1))$ \cite[Section~6]{Hit}.
\end{Remark}
\begin{Remark} It is perhaps worth pointing out that, for any real spectral curve $S$ and any~$p$, $\Jac^{g-1}_\oR(S)\backslash \Theta_S$ has a nonempty intersection with $\mathscr{J}_k^p$. Indeed, for small $s\in \oR$, the line bundle $\mathscr{L}^s(k-2)|_S$ belongs to $\mathscr{J}_k^0$ (cf.~\cite[paragraph after formula~(6.11)]{Hit}). Thus the map associating to $L\in \mathscr{J}_k^0$ its support $S$ is surjective. Each $\mathscr{J}_k^p$ is, however isomorphic to $\mathscr{J}_k^0$, e.g., via $A(\zeta)\mapsto D_1A(\zeta)D_2$ for an appropriately chosen pair of diagonal matrices.
\end{Remark}

We shall be interested only in the component $\mathscr{J}_k^0$. The sheaves in this component are represented by matrix polynomials of the form
\begin{equation}
T(\zeta)=(T_2+{\rm i} T_3)+2{\rm i} T_1\zeta+(T_2-{\rm i} T_3)\zeta^2,\qquad T_i\in\u(k),\label{T(zeta)}
\end{equation}
modulo conjugation by $U(k)$.
As in \cite{theta}, we shall call sheaves belonging to $\mathscr{J}_k^0$ {\em definite}.

\subsection{Nahm's equations}
$\Jac^{g-1}(S)$ is a torsor for ${\rm Pic}^0(S)$. Therefore the tangent bundle of $\Jac^{g-1}(S)$ is parallelisable and canonically isomorphic to $ \Jac^{g-1}(S)\times H^1(S,\sO_S)$. If we choose an element of $ H^1(S,\sO_S)$, we obtain a linear flow on $\Jac^{g-1}(S)$. Restricting this flow to the complement of the theta divisor, and choosing an appropriate connection (cf.\ \cite{Hit} and \cite{AHH}) yields a flow of quadratic matrix polynomials corresponding to elements of $\Jac^{g-1}(S)\backslash \Theta_S$. In particular, for the flow given by $[\eta/\zeta]\in H^1(S,\sO_S)$, i.e., $L\mapsto L\otimes \mathscr{L}^z$, there is a connection such that the restriction of the flow to $z\in \oR$ and to the definite line bundles
(i.e., to matrix polynomials of form \eqref{T(zeta)}) is given by
\begin{equation*}
\frac{\partial T(\zeta)}{\partial z}=\frac{1}{2}\bigg[T(\zeta),\frac{\partial T(\zeta)}{\partial \zeta}\bigg],
\end{equation*}
which is equivalent to Nahm's equations
\begin{equation}\label{eq:Nahm}
\dot{T_i}+\frac{1}{2}\sum_{j,k}\epsilon_{ijk}[T_j.T_k]=0,\qquad
i=1,2,3.
\end{equation}

\section{Factorisation of matrix polynomials}
We consider the flat hyperk\"ahler manifold $T^\ast \Mat_{k,k}(\cx)$, which we identify with $\Mat_{k,k}(\cx)\oplus \Mat_{k,k}(\cx)$.
It has a natural tri-Hamiltonian $U(k)\times U(k)$-action given by
\[
(g,h).(A,B)=\big(gAh^{-1},hBg^{-1}\big),
\]
and the corresponding hyperk\"ahler moment maps are:
\begin{alignat*}{3}
& (\mu_2+{\rm i}\mu_3)(A,B)=AB,\qquad&& 2{\rm i}\mu_1(A,B)=AA^\ast-B^\ast B,&
\\
&(\nu_2+{\rm i}\nu_3)(A,B)=-BA,\qquad && 2{\rm i}\nu_1(A,B)=BB^\ast-A^\ast A.&
\end{alignat*}
We can view these moment maps as sections of $\sO(2)\otimes \gl_k(\cx)$ over the $\oP^1$ parametrising complex structure, and write them as quadratic matrix polynomials:
\begin{gather}\mu(\zeta)=(A-B^\ast\zeta)(B+A^\ast\zeta),\label{mu}
\\
 \nu(\zeta)=-(B+A^\ast\zeta)(A-B^\ast\zeta).\label{nu}
 \end{gather}
As explained in the previous section $\mu(\zeta)$ and $-\nu(\zeta)$ define $1$-dimensional sheaves $\sF$, $\sF^\prime$ in~$\mathscr{J}_k^0$
(i.e., real, acyclic, and definite). Moreover, $\sF$ and $\sF^\prime$ are supported on the same spectral curve~$S$. Our first goal is to relate $\sF^\prime$ to $\sF$. Since we do not need the reality conditions for this, let us consider arbitrary linear matrix polynomials $A(\zeta)$, $B(\zeta)$, such that the roots of $\det A(\zeta)$ are disjoint from the roots of $\det B(\zeta)$. Let $\sF\in \mathscr{J}_k$ (resp.~$\sF^\prime\in \mathscr{J}_k$) be the sheaf determined by $A(\zeta)B(\zeta)$ (resp.~by $B(\zeta)A(\zeta)$). Let $S$ be the common support of $\sF$ and $\sF^\prime$, and let $\Delta_A$ (resp.~$\Delta_B$) be the Cartier divisor on $S$ given by $\eta=0$ on the open subset $\det B(\zeta)\neq 0$ (resp.~on the open subset $\det A(\zeta)\neq 0$).
\begin{Proposition} $\sF^\prime\simeq \sF(1)[-\Delta_A]$.
\label{sF'}\end{Proposition}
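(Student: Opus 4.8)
The plan is to work with the two finite flat projections $\pi\colon S\to\oP^1$ and to compare the push-forwards $\pi_\ast\sF$ and $\pi_\ast\sF'$, both of which are isomorphic to $\sO(-1)^{\oplus k}$ on $\oP^1$ by the discussion preceding \eqref{res}. The matrix $A(\zeta)B(\zeta)$ realizes the $\pi_\ast\sO_S$-module structure on $\pi_\ast\sF$, and $B(\zeta)A(\zeta)$ does the same for $\pi_\ast\sF'$; these two are intertwined by the maps $A(\zeta)\colon\pi_\ast\sF'\to\pi_\ast\sF(1)$ and $B(\zeta)\colon\pi_\ast\sF\to\pi_\ast\sF'(1)$, since $A(\zeta)\cdot B(\zeta)A(\zeta)=\bigl(A(\zeta)B(\zeta)\bigr)\cdot A(\zeta)$. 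The first step is to interpret these intertwiners as sheaf maps on $S$ itself: because $\eta$ acts on $\pi_\ast\sF'$ as $B(\zeta)A(\zeta)$ and on $\pi_\ast\sF(1)$ as $A(\zeta)B(\zeta)$, the homomorphism induced by $A(\zeta)$ commutes with multiplication by $\eta$ and by $\zeta$, hence descends to a morphism of $\sO_S$-modules $a\colon\sF'\to\sF(1)$. Likewise $B(\zeta)$ gives $b\colon\sF\to\sF'(1)$.

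The second step is to identify the zero loci of $a$ and $b$. On the open set $U_B=\{\det B(\zeta)\neq0\}$, the matrix $A(\zeta)$ is invertible precisely off the points where $\det A(\zeta)=0$, i.e.\ where $\eta=0$ on $S$ — this is exactly the Cartier divisor $\Delta_A$. So over $U_B$ the map $a$ vanishes (to first order, since $\Delta_A$ is reduced and the matrix entries vanish simply) along $\Delta_A$ and is an isomorphism elsewhere; dually, $b$ is an isomorphism over $U_B$. Over the complementary open set $U_A=\{\det A(\zeta)\neq0\}$ the roles swap: $a$ is an isomorphism and $b$ vanishes along $\Delta_B$. Since $U_A\cup U_B=S$ (the roots of $\det A$ and $\det B$ are disjoint by hypothesis, so no point of $S$ lies over a common root, but more simply every point of $\oP^1$ lies in at least one of the two opens — here one should note $S$ may meet the fibre over a root of $\det A$, but such a point is automatically in $U_B$), the composite $b\circ a\colon\sF'\to\sF'(2)$ is multiplication by a section of $\sO_S(2)$ vanishing exactly on $\Delta_A+\Delta_B$, which is the section cut out by $\eta$, i.e.\ $a$ is injective with cokernel supported on $\Delta_A$. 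Hence $a$ exhibits $\sF'$ as the subsheaf $\sF(1)[-\Delta_A]\subset\sF(1)$, giving the claimed isomorphism $\sF'\simeq\sF(1)[-\Delta_A]$.

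The main obstacle I anticipate is the bookkeeping at points of $S$ lying over a root $\zeta_0$ of $\det A(\zeta)$ (or $\det B(\zeta)$): there one must check that the drop in rank of $A(\zeta_0)$, combined with the fact that $A(\zeta_0)B(\zeta_0)$ is still a regular matrix (since $\sF\in\mathscr{J}_k$), forces the vanishing of $a$ to be exactly along the reduced divisor $\Delta_A$ with the correct multiplicity, and no extra vanishing in the "vertical" directions of $\eta$. Concretely, one wants that at such a point the local length of $\Coker(a)$ equals the intersection multiplicity of $S$ with $\{\eta=0\}$ there; this follows from comparing determinants — $\det A(\zeta)$ up to a unit equals the resultant expressing $\Delta_A$ as a divisor on $S$ — but making the local computation clean (especially when $S$ is singular or non-reduced at that point, and when several sheets of $S$ come together) is the delicate point. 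An alternative, perhaps cleaner, route that avoids local analysis is to establish the isomorphism after pushing forward to $\oP^1$: show that $a$ induces an injection $\pi_\ast\sF'\hookrightarrow\pi_\ast\bigl(\sF(1)\bigr)$ of rank-$k$ bundles whose cokernel is a torsion sheaf on $\oP^1$ of length $\deg\Delta_A=\deg\det A(\zeta)$, and that this cokernel is exactly $\pi_\ast\sO_{\Delta_A}$; then faithful flatness of $\pi$ lets one conclude $\sF'\simeq\sF(1)[-\Delta_A]$ on $S$. Either way the essential input is the elementary identity $A(\zeta)\cdot B(\zeta)A(\zeta)=\bigl(A(\zeta)B(\zeta)\bigr)\cdot A(\zeta)$ together with the Beauville resolution \eqref{res}, and the twist by $\sO(1)$ is accounted for by the fact that $A(\zeta)$ raises degree by one.
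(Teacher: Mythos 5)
Your argument is correct, and it shares the paper's key input --- the intertwining identities $A(\zeta)\cdot B(\zeta)A(\zeta)=A(\zeta)B(\zeta)\cdot A(\zeta)$ and $B(\zeta)\cdot A(\zeta)B(\zeta)=B(\zeta)A(\zeta)\cdot B(\zeta)$, which produce the $\sO_S$-linear maps $a\colon \sF'\to\sF(1)$ and $b\colon\sF\to\sF'(1)$ --- but it finishes differently. The paper uses only $b$: it places $B(\zeta)$ as a vertical map between the two Beauville resolutions and reads off the induced sequence of cokernels, identifying $\Coker(b)$ with $\Coker\bigl(\eta\colon C\to C(2)\bigr)\simeq\sO_{\Delta_B}$ where $C=\Coker B(\zeta)$; the small point there is that $B(\zeta)A(\zeta)$ lands in the image of $B(\zeta)$, so the induced horizontal map on cokernels really is multiplication by $\eta$. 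You instead use both $a$ and $b$ together with the fact that the composite $\sF'\to\sF(1)\to\sF'(2)$ is multiplication by $\eta$. That composite is in fact the clean way to settle the multiplicity bookkeeping you flag as delicate: near $\Delta_A$ you are inside $U_B$, so $b$ is invertible there and $a$ factors locally as (multiplication by $\eta$) followed by an isomorphism; hence the image of $a$ is exactly $\sF(1)\otimes\sO_S(-\Delta_A)$ with the correct scheme structure, however singular or non-reduced $S$ and $\Delta_A$ may be. You should therefore lean entirely on that factorization and drop two weaker statements in your write-up: the claim that ``$\Delta_A$ is reduced and the matrix entries vanish simply'' (not true in general, and not needed), and the fallback via lengths of push-forwards (a cokernel of the right length supported on the right set does not by itself identify the divisor --- you need the image, not just the length). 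What your route buys is that you never have to compute $\Coker B(\zeta)$ or check it is cyclic; what the paper's route buys is that it uses only one of the two intertwiners and reads everything off a single commutative diagram.
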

\begin{proof} We have a commutative diagram
\[
\begin{tikzcd} & 0 \ar[d] &[30pt] 0 \ar[d]& 0\ar[d] &\\[-6pt] 0 \ar[r] & \sO_{\oT}(-3)^{\oplus k}\ar[r, "\eta-A(\zeta)B(\zeta)"] \ar[d, "B(\zeta)"] &\sO_\oT(-1)^{\oplus k} \ar[d, "B(\zeta)"] \ar[r]& \sF\ar[r]\ar[d] & 0\\[3pt]
0 \ar[r] &\sO_\oT(-2)^{\oplus k}\ar[r, "\eta-B(\zeta)A(\zeta)"]\ar[d]& \sO_\oT^{\oplus k}\ar[r] \ar[d] &\sF^\prime(1)\ar[r]\ar[d] & 0\\ 0\ar[r] &C \ar[d]\ar[r, "\eta"]& C(2) \ar[d]\ar[r] & \sO_{\Delta_B}\ar[d]\ar[r] & 0,\\[-6pt] & 0 & 0 & 0 &
\end{tikzcd}
\]
where $C$ is the cokernel of $B(\zeta)$.
Therefore, $\sF^\prime(1)\simeq \sF[\Delta_B]$. Since $[\Delta_A+\Delta_B]\simeq \sO_S(2)$, the claim follows.
\end{proof}

We now ask whether a given quadratic polynomial $T(\zeta)$, corresponding to a sheaf in $\mathscr{J}_k^0$,
can be factorised as in formula~\eqref{mu}. Generically, the answer is yes.

\begin{Proposition} Let $T(\zeta)$ be of form \eqref{T(zeta)} and suppose that
\begin{enumerate}\itemsep=0pt
\item[$(i)$] the polynomial $\det T(\zeta)$ has $2n$ distinct zeros $\zeta_1,\dots,\zeta_{2n}$,
\item[$(ii)$] the corresponding eigenvectors $v_i\in \Ker T(\zeta_{i})$, $i=1,\dots,2n$, are in general position, i.e., for any choice $i_1<\dots<i_n\in \{1,\dots,2n\}$, $v_{i_1},\dots,v_{i_n}$ are linearly independent.
\end{enumerate}
Then $T(\zeta)$ can be factorised as $(A-B^\ast\zeta)(B+A^\ast\zeta)$.\label{distinct}
\end{Proposition}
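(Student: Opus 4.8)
The plan is to reduce the factorisation of a quadratic matrix polynomial $T(\zeta)$ to a linear-algebra problem on the spectral data, and then to solve that problem using the general-position hypothesis. First I would record what the two hypotheses say geometrically: condition $(i)$ says that the spectral curve $S=\{\det(\eta-T(\zeta)\text{-type data})\}$... more precisely that the divisor $\Delta=\{\eta=0\}\cap S$ on the support curve consists of $2n$ reduced points lying over the distinct $\zeta_i$; condition $(ii)$ says that the associated eigenline bundle is sufficiently generic. The point of the factorisation $T(\zeta)=(A-B^\ast\zeta)(B+A^\ast\zeta)$ is that, by Proposition~\ref{sF'} (with $A(\zeta)=A-B^\ast\zeta$ and $B(\zeta)=B+A^\ast\zeta$), it is equivalent to splitting the zero divisor of $\eta$ on $S$ into two halves $\Delta_A+\Delta_B$ of degree $n$ each, with $\Delta_A$ supported on $\{\det(B+A^\ast\zeta)\neq 0\}$ and $\Delta_B$ on $\{\det(A-B^\ast\zeta)\neq 0\}$, in a way compatible with the real structure. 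So the first real step is: choose a partition $\{1,\dots,2n\}=I\sqcup J$ with $|I|=|J|=n$, set $\Delta_B=\sum_{i\in I}[\zeta_i,\eta=0]$ and $\Delta_A=\sum_{j\in J}[\zeta_j,\eta=0]$, and aim to realise $A-B^\ast\zeta$ as a linear polynomial whose kernel along $S$ is exactly $\Delta_B$ (equivalently whose cokernel sheaf on $S$ is $\sO_{\Delta_B}$).

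Second, I would construct $A$ and $B$ explicitly from the eigenvectors. Write $v_1,\dots,v_{2n}$ for the eigenvectors with $T(\zeta_i)v_i=0$. Setting $w_i=A^\ast(\zeta_i)$... rather: the condition that $(A-B^\ast\zeta)(B+A^\ast\zeta)$ has the prescribed spectral data translates into saying that $B+A^\ast\zeta_i$ should annihilate $v_i$ for $i\in I$ and that $A-B^\ast\zeta_j$ should have $v_j$ in its kernel-transpose for $j\in J$; dually, $(B+A^\ast\zeta_j)v_j$ should lie in the kernel of $A-B^\ast\zeta_j$. Concretely, the pair $(A,B)\in\Mat_{k,k}(\cx)^2$ is determined by requiring $(B+A^\ast\zeta_i)v_i=0$ for $i\in I$ (that is $n$ linear conditions on the pair, one vector equation per index) together with the $n$ conditions coming from $J$ expressed via the left kernels. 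Because the $v_i$ are in general position, the $n$ vectors $\{v_i: i\in I\}$ form a basis of $\cx^k$ when $n=k$, and in the rectangular case they span the relevant subspace; this is exactly where hypothesis $(ii)$ is used to guarantee that the linear system determining $(A,B)$ from the chosen splitting has a solution with $A,B$ of full rank, so that $\det A(\zeta)$ and $\det B(\zeta)$ are genuinely degree-$k$ polynomials with disjoint root sets $\{\zeta_j:j\in J\}$ and $\{\zeta_i:i\in I\}$. Once $(A,B)$ is built, Proposition~\ref{sF'} combined with a dimension count ($\chi$ of both sides, and both sheaves lying in $\mathscr{J}_k$) identifies the product $(A-B^\ast\zeta)(B+A^\ast\zeta)$, up to conjugation, with the sheaf defined by $T(\zeta)$; since a sheaf in $\mathscr{J}_k$ determines its matrix polynomial up to $\GL_k(\cx)$-conjugation, and here the reality forces the conjugation into $U(k)$, one recovers $T(\zeta)$ on the nose after adjusting by a constant unitary.

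Third, I would deal with the reality constraint. The form \eqref{T(zeta)} with $T_i\in\u(k)$ means the sheaf $\sF$ is in $\mathscr{J}_k^0$, i.e.\ real and definite; the factors $A-B^\ast\zeta$ and $B+A^\ast\zeta$ are manifestly Hermitian conjugates of one another up to the real structure $\sigma$, so the product automatically has the right reality type provided the splitting $I\sqcup J$ is chosen $\sigma$-compatibly — and $\sigma$ acts on the roots $\zeta_i$ as $\zeta\mapsto -1/\bar\zeta$, which is a fixed-point-free involution on $\oP^1$, hence pairs up the $2n$ roots into $n$ conjugate pairs; picking one root from each pair gives the required $I$. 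The remaining task is to check that with this choice $(A,B)$ really is defined over... that the Hermitian moment-map equations \eqref{mu}--\eqref{nu} are met, which is forced once the spectral data and reality type match because those equations are precisely what cut out $\mathscr{J}_k^0$ among all $\mathscr{J}_k$.

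I expect the main obstacle to be the middle step: showing that the linear system for $(A,B)$ attached to a generic splitting is not merely solvable but solvable with $A$, $B$ of the correct rank so that $\det A(\zeta)$, $\det B(\zeta)$ have the predicted disjoint zeros (otherwise Proposition~\ref{sF'} does not apply and the whole reduction collapses). This is where hypothesis $(ii)$ must be used in full strength: every $n$-subset of the eigenvectors is independent, which should translate into the relevant $k\times k$ (or rectangular) matrices assembled from the $v_i$ being invertible, hence into nondegeneracy of the linear map $(A,B)\mapsto$ (the $2n$ evaluation conditions). A secondary subtlety is bookkeeping the twists: Proposition~\ref{sF'} gives $\sF'\simeq\sF(1)[-\Delta_A]$, so one must track the $\sO_S(1)$-twist and the degree of $\Delta_A$ carefully to confirm that both products land in $\Jac^{g-1}\setminus\Theta_S$ and correspond to honest regular quadratic polynomials rather than to sheaves on the theta divisor.
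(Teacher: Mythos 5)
Your overall strategy---split the zeros of $\det T(\zeta)$ into a $\sigma$-compatible pair $\Delta\sqcup\sigma(\Delta)$, build a linear factorisation $T(\zeta)=A(\zeta)B(\zeta)$ whose factors' determinants vanish on the prescribed halves (using hypothesis $(ii)$ to make the relevant eigenvector matrices invertible), and then appeal to reality---matches the first half of the paper's argument, which obtains the existence and uniqueness of such a factorisation from Malyshev's Theorems 1 and 2 rather than by solving the linear system by hand. But there is a genuine gap in your final step. Choosing $\Delta$ to contain exactly one root from each $\sigma$-conjugate pair does \emph{not} automatically produce a factorisation of the Hermitian-adjoint shape $(A-B^\ast\zeta)(B+A^\ast\zeta)$. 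What reality actually gives is the following: if $T(\zeta)=(C_1+D_1\zeta)(C_2+D_2\zeta)$ with $\det(C_2+D_2\zeta)$ vanishing on $\Delta$, then applying $\sigma$ yields a second factorisation $(D_2^\ast-C_2^\ast\zeta)(-D_1^\ast+C_1^\ast\zeta)$ realising the same splitting of the spectrum, and uniqueness of the factors (up to an invertible constant matrix inserted between them) forces $D_1^{-1}C_2^\ast$ to be an invertible \emph{Hermitian} matrix, say $-gdg^\ast$ with $d$ a diagonal sign matrix. One then gets $T(\zeta)=(C_1g+D_1g\zeta)\,d\,(-g^\ast D_1^\ast+g^\ast C_1^\ast\zeta)$, which has the desired form only when $d$ is the identity, i.e., only when $D_1^{-1}C_2^\ast$ is definite of the right sign. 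For a general choice of $\Delta$ this fails, and the definiteness of the sheaf (membership in $\mathscr{J}_k^0$) does not by itself rescue an arbitrary splitting; your assertion that the reality type is ``forced'' conflates reality (which holds for every $\sigma$-compatible $\Delta$) with definiteness (which holds for only one).

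The missing idea is the counting argument that closes the paper's proof: the assignment $\Delta\mapsto d$ is injective (again by uniqueness of monic factors), and since there are $2^k$ admissible choices of $\Delta$ and $2^k$ sign matrices $d$, this map is a bijection; hence \emph{some} choice of $\Delta$ yields $d=\mathrm{Id}$ and the desired factorisation. Without an argument of this kind, or some other mechanism for singling out the correct splitting, your construction only establishes $T(\zeta)=A(\zeta)\,d\,B(\zeta)$ with $B(\zeta)$ the $\sigma$-adjoint of $A(\zeta)$ and $d$ a possibly indefinite sign matrix, which is strictly weaker than the Proposition. A secondary remark: routing the problem through Proposition~\ref{sF'} is unnecessary here---that proposition compares the sheaves attached to $A(\zeta)B(\zeta)$ and $B(\zeta)A(\zeta)$ and plays no role in producing the factorisation, which is a statement about matrix polynomials handled directly.
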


\begin{proof}
After rotating $\oP^1$, we can assume that $\zeta=\infty$ is not a root of $\det T(\zeta)$. Let $\Delta\cup \sigma(\Delta)$ be a decomposition of the set of zeros of $\det T(\zeta)$. Theorem 1 in \cite{Mal} implies that there is a~decomposition $T(\zeta)=(C_1+D_1\zeta)(C_2+D_2\zeta)$ such that $\Delta$ is the set of roots of $\det(C_2+D_2\zeta)$. Applying the real structure shows that $(D_2^\ast-C_2^\ast\zeta)(-D_1^\ast+C_1^\ast\zeta)$ is also a factorisation of $T(\zeta)$.
We can rewrite these factorisations as
\[
T(\zeta)=\big(C_1D_1^{-1}+\zeta\big)(D_1C_2+D_1D_2\zeta)=\bigl(-D_2^\ast(C_2^\ast)^{-1}+\zeta\big)(C_2^\ast D_1^\ast-C_2^\ast C_1^\ast\zeta).
\]
Theorem 2 in \cite{Mal} implies now that $C_1D_1^{-1}=-D_2^\ast(C_2^\ast)^{-1}$, i.e., $D_1^{-1}C_2^\ast=-C_1^{-1}D_2^\ast$. In addition, comparing the constant coefficients of the two factorisations, we have $C_1C_2=-D_2^\ast D_1^\ast$. Hence
\[
\big(D_1^{-1}C_2^\ast\big)^\ast=C_2(D_1^\ast)^{-1}=-C_1^{-1}D_2^\ast D_1^\ast(D_1^\ast)^{-1}= -C_1^{-1}D_2^\ast=D_1^{-1}C_2^\ast.
\]
Therefore, $D_1^{-1}C_2^\ast$ is hermitian (and invertible). We can write it as $-gdg^\ast$, where $g$ is invertible and $d$ is diagonal with diagonal entries equal $\pm 1$.
Then
\begin{equation} T(\zeta)=(C_1+D_1\zeta)gg^{-1}(C_2+D_2\zeta)=(C_1g+D_1g\zeta)d(-g^\ast D_1^\ast+g^\ast C_1^\ast\zeta).
\label{CDf}\end{equation}
The uniqueness of monic factors of $T(\zeta)$ implies that the map $\Delta\mapsto d$ is injective. Since both sets have the same cardinality (equal to $2^k$), this map is surjective, i.e., there is a choice of $\Delta$ such that the corresponding $d$ is the identity matrix, and formula~\eqref{CDf} becomes the desired factorisation.
\end{proof}

\section{Deformed instanton moduli spaces\label{bow}}

We consider a bow variety $\sM$ corresponding to the bow {representation} diagram {in Figure~\ref{fig:Bowdiagram}}:
\begin{figure}[h]\centering
\includegraphics[width=0.45\textwidth]{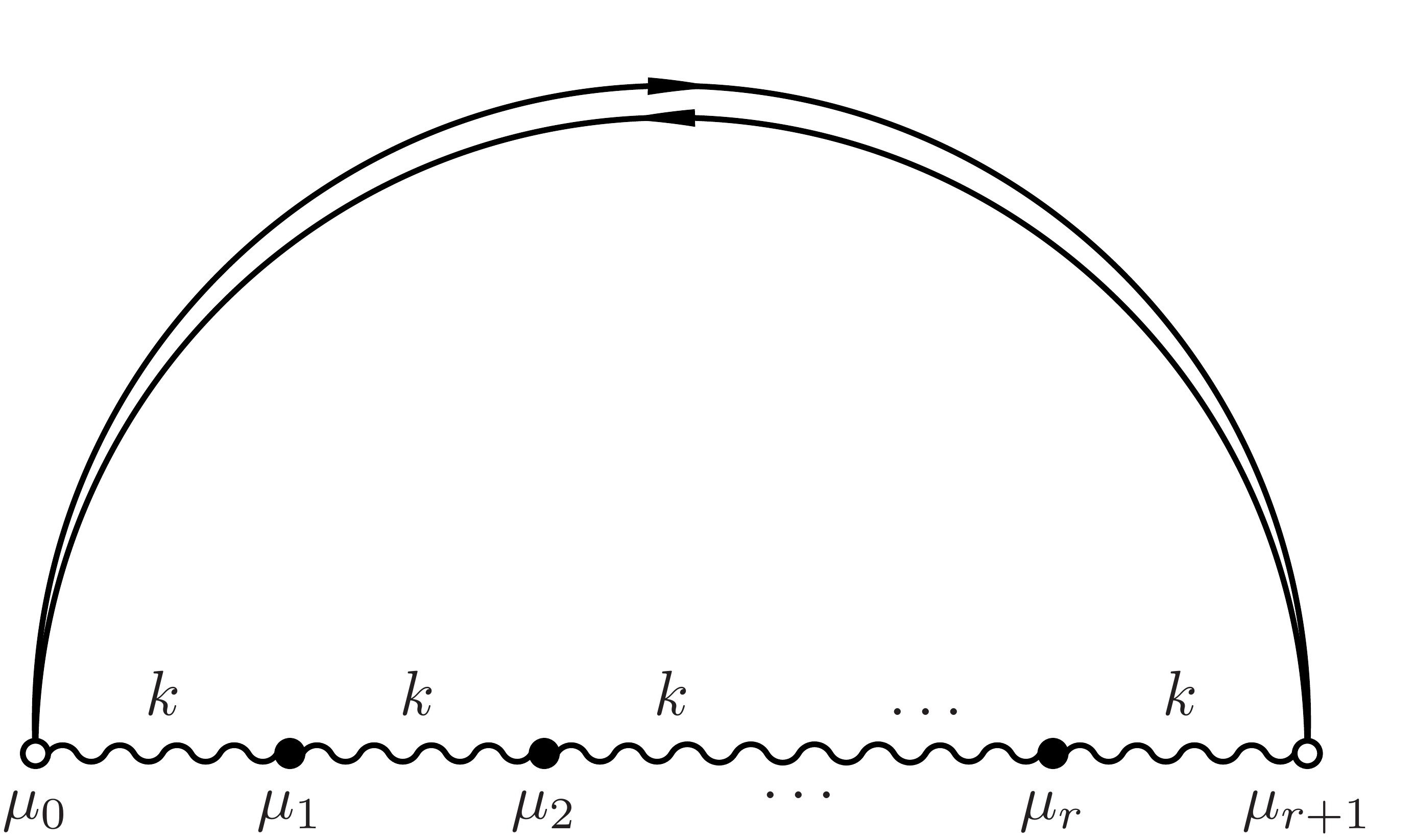}
\put(-111,114){\makebox(0,0)[lb]{\small$A$}}
\put(-112,94){\makebox(0,0)[lb]{\small$B$}}
	\caption{Bow representation diagram with $r$ $\lambda$-points $\mu_1,\ldots,\mu_r$ and constant rank $k$.}	\label{fig:Bowdiagram}
\end{figure}	
\noindent
with $r$ $\lambda$-points and the rank of {all bundles} equal to $k$.
In other words, $\sM$ is the moduli space of $\u(k)$-valued solutions to Nahm's equations on $[\mu_0,\mu_{r+1}]$ which have rank $1$ {discontinuity in $(T_2+{\rm i} T_3)+2{\rm i} T_1\zeta+(T_2-{\rm i} T_3)\zeta^2$ at each $\mu_i$, $i=1,\dots,r$, and
$(T_2+{\rm i} T_3)+2{\rm i} T_1\zeta+(T_2-{\rm i} T_3)\zeta^2$ is equal to $(B+A^\ast\zeta)(A-B^\ast\zeta)+c_L(\zeta){\rm Id}$ at $\mu_0$ and to $(A-B^\ast\zeta)(B+A^\ast\zeta)+c_R(\zeta){\rm Id}$ at $\mu_{r+1}$, where $A,B\in \Mat_{k,k}(\cx)$ and $c_L$, $c_R$ are quadratic polynomials satisfying the reality condition.

Let us consider two limiting cases.

First, is the case when we let the lengths of all intervals go to zero, then $\sM$ is the quotient by $U(k)$ of the set of solutions to the following matrix equations:
\[
[A-B^\ast\zeta,B+A^\ast\zeta]=\sum_{i=1}^r (v_i-\bar{w}_i\zeta)(w_i+\bar{v}_i\zeta)^{\rm T}+(c_L(\zeta)-c_R(\zeta)),
\]
where $v_i,w_i\in \cx^k$.
In particular, if $c_L(\zeta)-c_R(\zeta)=a\zeta$, then $\sM$ with the complex structure corresponding to $\zeta=0$ is biholomorphic to the moduli space of framed torsion-free sheaves on~$\oP^2$ with rank $r$ and $c_2=k$ \cite[Theorem~2.1]{Nak}. For an arbitrary nonzero $(c_L(\zeta)-c_R(\zeta))$, $\sM$ (with $\mu_0=\dots=\mu_{r+1}$) has been interpreted by Nekrasov and Schwarz as a moduli space of instantons on a noncommutative $\oR^4$ \cite{Nekr}.
We can, therefore, view $\sM$ with arbitrary $\mu_i$ as a {\em deformation} of the moduli space of instantons on noncommutative $\mathbb{R}^4$ with the noncommutativity parameter $c_L(\zeta)-c_R(\zeta)$.
It changes the space geometry from a higher-dimensional ALE to ALF kind, as we explain in the beginning of Section~\ref{Sec:Asymp}. For $r=1$, these moduli spaces have been investigated in detail by Takayama~\cite{Tak}.

We remark that the hyperk\"ahler metric on our $\sM$ has a $T^r$-symmetry, compared to a $U(r)$-symmetry of the moduli space of instantons on the noncommutative $\mathbb{R}^4.$

Second, in the case with $c_L(\zeta)=c_R(\zeta)$, $\sM$ is isometric to the moduli space of instantons on the Taub-NUT space \cite{CLHS}. Notably, while the deformation to nonzero $c_L(\zeta)-c_R(\zeta)$ appears rather benign from the moduli space point of view, it is nearly fatal to the ADHM-type transform from the bow to the instanton, since the corresponding {\em small bow representation} moduli space becomes empty, instead of being the Taub-NUT space. This is completely analogous to the situation with the original ADHM construction and its noncommutative deformation of Nekrasov and Schwarz.

\subsection{Complex structures\label{complex}}
We shall now show that the complex-symplectic structures of $\sM$ do not depend on the $\mu_i$ (this has been shown by Takayama for $r=1$). First of all, $\sM$ is isomorphic to a hyperk\"ahler quotient of $\tilde \sM\times T^\ast \Mat_{k,k}(\cx)$ by $U(k)\times U(k)$, where $\tilde\sM$ is the moduli space of solutions to Nahm's equations on $r+1$ intervals as above, without the bifundamental representation, i.e., without the half-circles labelled by $A$ and $B$. We discuss first the complex-symplectic structures of~$\tilde\sM$. Let us consider the complex structure $I$ corresponding to $0\in \oP^1$ (all complex structures of $\tilde\sM$ are isomorphic). We can, following Donaldson \cite{Don}, separate the data given by Nahm's equations and boundary conditions, into a complex and a real part.
The complex part is given by solutions of the Lax equation $\dot\beta=[\beta,\alpha]$ on each interval {$[\mu_i,\mu_{i+1}]$}, where $\beta(t)=T_2(t)+{\rm i} T_3(t)$, $\alpha(t)={\rm i} T_1(t)$ with rank $1$ discontinuity at $\mu_1,\dots,\mu_r$.
It follows from results of Donaldson~\cite{Don} and Hurtubise~\cite{Hurt} that $\tilde\sM$ is biholomorphic to the quotient of this space by ${\rm GL}(k,\cx)$-valued gauge transformations which are identity at $\mu_0$ and $\mu_{r+1}$ and match at the remaining~$\mu_i$.
This biholomorphism preserves also the complex-symplectic form.
On each interval one can apply a complex gauge transformation to make $\alpha$ identically zero and $\beta$ constant. If we do this
beginning with the left-most interval and such a gauge transformation with $g(\mu_0)=1$, we can make $\beta(t)$ equal to a constant $\beta_i$ on each $[\mu_{i-1},\mu_i]$, $i=1,\dots, r+1$, with $\beta_{i+1}-\beta_i=I_iJ_i$ for a~vector $I_i$ and a covector $J_i$. The map associating to $(\beta(\mu_0),g(\mu_{r+1}), I,J)$, where $I=[I_1,\dots,I_r]$ and $J=[J_1,\dots, J_r]^{\rm T}$ to a point of $\tilde\sM$ is a complex-symplectic isomorphism between $\tilde\sM$ and $T^\ast {\rm GL}(k,\cx)\times T^\ast\Mat_{k,r}$.

The complex-symplectic quotient of the product of $T^\ast {\rm GL}(k,\cx)\times T^\ast\Mat_{k,r}$ and $T^\ast \Mat_{k,k}(\cx)$ by ${\rm GL}(k,\cx)\times {\rm GL}(k,\cx)$ (which is the remaining gauge freedom at $\mu_0$ and $\mu_{r+1}$)
 can be performed in two stages: the quotient by the left copy of ${\rm GL}(k,\cx)$ (the one which acts trivially on $I$ and~$J$) is $T^\ast\Mat_{k,r}\times T^\ast \Mat_{k,k}(\cx)$.
The remaining symplectic quotient is the
same one as in the case with
$\mu_0=\dots=\mu_{r+1}$. This shows that, as long as $c_L(\zeta)-c_R(\zeta)\neq 0$, $\sM$ is isomorphic, as~a~complex-symplectic manifold,
to the corresponding space of noncommutative instantons.

\subsection{Spectral curves}
We shall now describe the moduli space $\sM$ using the language of spectral curves and line bundles.
 We denote by $S_i$ the spectral curve on the interval $[\mu_{i},\mu_{i+1}]$. Due to the matching conditions, $S_{r}$ is equal to $S_0$ shifted by $\eta\mapsto \eta+ c(\zeta)$, where $c(\zeta)=c_L(\zeta)-c_R(\zeta)$.
\par
Hurtubise and Murray \cite{HM} analysed what happens to spectral curves and line bundles at rank $1$ {discontinuity} of solutions to Nahm's equations. Namely, for $i=0,\dots,r-1$, we have $S_i\cap S_{i+1}=D_{i,i+1}\cup D_{i+1,i}$ with $\sigma(D_{i,i+1})=D_{i+1,i}$ and the line bundles at $\mu_{i+1}$ equal to $\sO_{S_i}(2k)[-D_{i,i+1}]\in \Jac^{g-1}(S_i)$, $\sO_{S_{i+1}}(2k)[-D_{i,i+1}]\in \Jac^{g-1}(S_{i+1})$. It follows that $S_1,\dots,S_{r-1}$ satisfy the following condition
\begin{equation} \mathscr{L}_{S_i}^{\mu_{i+1}-\mu_{i}}[D_{i,i+1}-D_{i-1,i}]\simeq \sO_{S_i}.\label{triviality}
\end{equation}
It remains to identify the condition satisfied by $S_0$ and $S_{r}$. The line bundles at $\mu_0$ and
at $\mu_{r+1}$ are
 $\mathscr{L}_{S_0}^{\mu_0-\mu_1}(2k)[-D_{0,1}]$  and $\mu_{r+1}$ is $\mathscr{L}_{S_{r}}^{\mu_{r+1}-\mu_r}(2k)[-D_{r-1,r}]$, respectively.
 For any quadratic polynomial $c=c(\zeta)$ denote by $\phi_c$ the automorphism of $\oT={\rm T}\oP^1$ given by $\eta\mapsto \eta+ c(\zeta)$. The induced map on $H^1(\oT,\sO_{\oT})$ is trivial. Let us denote by $S_c$ the image of $S_0$ under $\phi_{c_L}$ (equivalently, the image of $S_{r}$ under $\phi_{c_R}$). It follows that $B(\zeta)A(\zeta)$ represents the line bundle $\mathscr{L}_{S_c}^{\mu_0-\mu_1}(2k)[-\phi_{c_L}(D_{0,1})]$ and $A(\zeta)B(\zeta)$ represents the line bundle $\mathscr{L}_{S_c}^{\mu_{r+1}-\mu_r}(2k)[-\phi_{c_R}(D_{r-1,r})$.
 Proposition \ref{sF'} implies that
\[
\mathscr{L}_{S_c}^{\mu_0-\mu_1}(2k)\bigl[-\phi_{c_L} (D_{0,1} )\bigr]\simeq \mathscr{L}_{S_c}^{\mu_{r+1}-\mu_r}(2k)\bigl[-\phi_{c_R} (D_{r-1,r} )\bigr]\otimes \sO_{S_c}(1)[-\Delta_A],
\]
that is,
 \begin{equation} \mathscr{L}_{S_c}^{\mu_{r+1}-\mu_r+\mu_1-\mu_0}(1)\bigl[\phi_{c_L} (D_{0,1} )-\phi_{c_R} (D_{r-1,r} )-\Delta_A\bigr]\simeq \sO_{S_c},\label{triviality2}
 \end{equation}
 where $\Delta_A$ is be the divisor on $S_c$ cut out by $\eta=0$ on the open subset $\det B(\zeta)\neq 0$ (thus $\det A(\zeta)=0$ on $\Delta_A$).
In addition, the spectral curves $S_c,S_1,\dots, S_{r-1}$ satisfy appropriate nondegeneracy conditions, which simply mean that the flow of line bundles on each $S_i$ does not meet the theta divisor. Conversely, given generic curves $S_c,S_1,\dots, S_{r-1}$ satisfying these conditions together with trivialisations in the formulas~\eqref{triviality} and \eqref{triviality2}, we obtain, using the results of \cite{HM} and Proposition \ref{distinct}, a unique gauge equivalence class of solutions to Nahm's equations in $\sM$. Here ``generic" means that $S_i \cap S_{i+1}$ for $i=0,\dots,r-1$ as well as $S_c\cap \{\eta=0\}$ consist of distinct points.

 \subsection{Generalised Legendre transform}
 The complex symplectic quotient described in Section~\ref{complex} can be performed for each complex structure, i.e., on the fibres of the twistor space of $\tilde \sM\times T^\ast \Mat_{k,k}(\cx)$. The spectral curves and (real) trivialisations of line bundles \eqref{triviality} and \eqref{triviality2} provide twistor lines corresponding to an open dense subset of $\sM$. In particular, for each complex structure, the roots of polynomials defining spectral curves and values of trivialising sections of line bundles \eqref{triviality}--\eqref{triviality2} define Darboux coordinates for the corresponding complex-symplectic form. This picture is a particular case of the generalised Legendre transform construction of Lindstr\"om and Ro\v{c}ek \cite{HKLR,LR}, which we now recall.

 The generalised Legendre transform describes $4n$-dimensional hyperk\"ahler metrics, the twistor space $Z^{2n+1}$ of which admits a projection to the total space of a vector bundle $E=\bigoplus_{i=1}^n\sO(2k_i)$ over $\oP^1$, $k_i\geq 1$, $i=1,\dots,n$. The projection is required to commute with real structures and its fibres for each $\zeta\in \oP^1$ are Lagrangian for the fibre-wise complex symplectic form on $Z^{2n+1}$. The hyperk\"ahler structure is then defined on a subset $M$ of real sections of $E$ consisting of those $\alpha_i(\zeta)=\sum_{a=0}^{2k_i}w_{ia}\zeta^a$, $i=1,\dots, n$, which satisfy
 \begin{align}\label{eq:constr}
 F_{w_{ia}}&:=\frac{\partial F}{\partial w_{ia}}=0\qquad \text{for}\quad a=2,\dots, 2k_i-2,
 \end{align}
 for a function $F$ defined as a contour integral
 \begin{equation}
 F(w_{ia})=\oint_c G\bigl(\zeta,\alpha_1(\zeta),\dots,\alpha_n(\zeta)\bigr) \frac{{\rm d}\zeta}{\zeta^2}.\label{FG}
 \end{equation}
 Complex coordinates on $\sM$ with respect to the complex structure corresponding to $\zeta=0$ are given by $z_i=w_{i0}$, $i=1,\dots, n$, and by $u_i$, where $ u_i=F_{w_{i1}}$ if $k_i\geq 2$ and $u_i+\ol u_i=F_{w_{i1}}$ if $k_i=1$.
The other coefficients $w_{ia}$ with $a>0$ are understood to be functions of $\{z_i, u_i\}$ determined by equations~\eqref{eq:constr}.
The K\"ahler potential is given by $K=F-2\sum_{i=1}^n\re u_iw_{i1}$.

 In the case of our bow variety $\sM$, $E=\bigoplus_{i=1}^k \sO(2i)^{\oplus r}$ with the summands corresponding to coefficients of powers of $\eta$ in the polynomials defining the spectral curves $S_c,S_1,\dots,S_{r-1}$.
 It has been shown in \cite{BielGZ} that conditions such as \eqref{triviality} and
 \eqref{triviality2} on spectral curves correspond to a particular choice of the function $G$ and the contour $c$ in formula~\eqref{FG}.
 In fact, one can replace the usually multi-valued function $G$ with a single-valued function on a branched cover of $\oP^1$. This cover is precisely the union of spectral curves $S_c\cup S_1\cup\cdots\cup S_{r-1}$. Although it is not necessary (as long as we allow integration over chains rather than contours), it is better to enlarge this cover by the fixed projective line $\eta=0$ (the integration contour will enter this line from $S_c$ at points of $\Delta_B$ and leave it at points of $\Delta_A$).

 In order to have trivialising sections satisfying {assumptions} of~\cite[Theorem~7.5]{BielGZ} (cf.\ Example~8.2 there), we need to replace a nonvanishing section $s_i$ of the left-hand side in formula~\eqref{triviality} by $s_i/\ol{\sigma^\ast s_i}$, which is a section of
\[
\mathscr{L}_{S_i}^{2(\mu_{i+1}-\mu_{i})}[D_{i,i+1}+D_{i,i-1}-D_{i+1,i}-D_{i-1,i}].
\]
 Similarly, we obtain from formula~\eqref{triviality2} a section of
\[
\mathscr{L}_{S_c}^{2(\mu_{r+1}-\mu_r+\mu_1-\mu_0)}\bigl[\phi_{c_L} (D_{0,1} ) +\phi_{c_R} (D_{r,r-1} )+\Delta_B -\phi_{c_L} (D_{1,0} ) -\phi_{c_R} (D_{r-1,r} )-\Delta_A\bigr].
\]
 The assumptions of \cite[Theorem~7.5]{BielGZ} are now satisfied, and we can conclude from it that the hyperk\"ahler metric on $\sM$ is given by the generalised Legendre transform applied to {the function~$F(w_{ia})$ given by}
\[
\oint_\gamma \frac{\eta}{2\zeta^2}\,{\rm d}\zeta
-\frac{1}{2\pi {\rm i}}\sum_{i=1}^{r-1}(\mu_{i+1}-\mu_{i})\oint_{\tilde 0_i}\frac{\eta^2}{2\zeta^3}\,{\rm d}\zeta-\frac{1}{2\pi {\rm i}}(\mu_{r+1}-\mu_r+\mu_1-\mu_0)\oint_{\tilde 0_c}\frac{\eta^2}{2\zeta^3}\,{\rm d}\zeta,
\]
 where $\tilde 0_i$ (resp.~$\tilde 0_c$) is the sum of simple contours around points in $S_i$ (resp.~in $S_c$) lying over ${0\in\oP^1}$, while $\gamma$ is a contour which enters (resp.~leaves) each $S_i$, $i=2,\dots,r$, at points of $D_{i+1,i}+D_{i-1,i}$
 (resp.~$D_{i,i+1}+D_{i,i-1}$), and it enters (resp.~leaves) $S_c$ at points of $\phi_{c_L}(D_{0,1}) +\phi_{c_R}(D_{r-1,r})+\Delta_A$ (resp.~$\phi_{c_L}(D_{1,0})+\phi_{c_R}(D_{r,r-1})+\Delta_B$).

 \section{Asymptotic metrics}\label{Sec:Asymp}
 In the case $\mu_0=\dots=\mu_{r+1}$, the hyperk\"ahler metric on $\sM$ has Euclidean volume growth (i.e., proportional to $ R^{4kr}$) and it is asymptotic to a Riemannian cone on a singular $3$-Sasakian manifold. Allowing the length of $m$ of the $r$ intervals $[\mu_i,\mu_{i+1}]$ to be positive, reduces the volume growth {power exponent by} $mk$. In particular, if $\mu_{i+1}-\mu_i>0$ for every $i=0,\dots,r$, then the volume growth is
 proportional to $R^{3kr}$. In this section, we shall show that, on an open dense subset, the metric is asymptotic to the Lee--Weinberg--Yi metric \cite{LWY}.

 The basic idea is the same as in \cite{BSU}: the functions $\hat{T}_i(t)=\epsilon T_i(\epsilon t)$ satisfy the same Nahm equations \eqref{eq:Nahm} as the original $T_i(t)$. Thus, exploring infinity of $\sM$ is equivalent to studying finite $\hat{T}_i$ on rescaled long intervals.
Under such rescaling, the lengths of the intervals go to infinity and we can consider a hyperk\"ahler manifold ``glued together" from $r$ moduli spaces of solutions to Nahm's equations on $\oR$ with a rank $1$ {discontinuity} at $t=0$, plus diagonal matrices~$A$,~$B$. The resulting hyperk\"ahler metric will be the asymptotic metric in the region of $\sM$ where spectral curves degenerate to unions of lines. Let us recall from \cite{BSU} the precise definition of these building blocks.

\subsection{Building blocks}
Let $a_-$, $a_+$ be positive real numbers. We shall denote\footnote{These were denoted by $\tilde F_{k,k}(a,a^\prime)$ in \cite{BSU}.} by $\sN_k(a_-,a_+)$ the moduli space of $\u(k)$-valued solutions {$(T_0(t),T_1(t),T_2(t),T_3(t))$} to Nahm's equations on $\oR$ satisfying the following conditions:
 \begin{itemize}\itemsep=0pt
 \item The solutions are analytic on $(-\infty, 0]$ and on $[0,\infty)$. At $t=0$, there is a rank one discontinuity, i.e., there exist vectors $I,J^\ast \in\oC^k$ such that $(T_2+i T_3)(0_+)-(T_2+i T_3)(0_-)=IJ$
and $T_1(0_+)-T_1(0_-)=\frac{1}{2}(II^\ast-J^\ast J).$
 	\item The $\hat{T}_i$ approach exponentially fast a diagonal limit as $t\rightarrow \pm\infty$ with $(T_1(-\infty),T_2(-\infty),\allowbreak T_3(-\infty))$ and $(T_1(+\infty),T_2(+\infty), T_3(+\infty))$ regular triples, i.e., the centraliser of the triple consists of diagonal matrices.
 	\item The gauge group has a Lie algebra consisting of functions $\rho\colon \oR\rightarrow\mathfrak{u}(k)$ such that:
 	\begin{enumerate}\itemsep=0pt
 		\item[(1)] $\rho(0)=0$ and $\dot{\rho}$ has a diagonal limit at $t\rightarrow \pm\infty$,
 		\item[(2)] $(\dot{\rho}-\dot{\rho}(+\infty))$ and $[\rho,\tau]$ decay exponentially fast for any regular diagonal matrix $\tau\in\mathfrak{u}(k)$, and similarly at $t=-\infty$,
 		\item[(3)] $a_+\dot{\rho}(+\infty)+\lim_{t\rightarrow +\infty}(\rho(t)-t\dot{\rho}(+\infty))=0$,
 		and similarly at $t=-\infty$.
 	\end{enumerate}
 \end{itemize}
 Let us denote by ${\bf x}^+_i$ (resp.~${\bf x}^-_i$) the $i$-th diagonal entry of the triple {$(T_1(+\infty),\allowbreak T_2(+\infty),\allowbreak T_3(+\infty))$} (resp.~$(T_1(-\infty),T_2(-\infty),T_3(-\infty))$).
The collection $\{{\bf x}_i^+\}_{i=1}^k$ of $k$ triplets (as well as $\{{\bf x}_i^-\}_{i=1}^k$)
might be viewed as $k$ points of $\oR^3$.
 As shown in \cite{BSU}, $\sN_k(a_-,a_+)$ is a hyperk\"ahler\footnote{Strictly speaking the metric is positive-definite only in an asymptotic region.} manifold, which topologically is a torus bundle over $\tilde C_k\big(\oR^3\big)\times \tilde C_k\big(\oR^3\big) $, where $\tilde C_k\big(\oR^3\big)$ denotes the configuration space of $k$ distinct and distinguishable points in~$\oR^3$. The action of the torus $T^k\times T^k$ is tri-Hamiltonian and the hyperk\"ahler moment map is given by ${\bf x}^-_i$, ${\bf x}^+_i$, $i=1,\dots,k$.
 Let us write~${\bf x}^{-i}$ for~${\bf x}^-_i$, ${\bf x}^{i}$ for ${\bf x}^+_i$, and ${\bf x}_\nu\in \oR^{2k}$, $\nu=1,2,3$, for the vector of $\nu$-coordinates of the ${\bf x}^i$, $|i|=1,\dots,k$. The metric is given by the Gibbons--Hawking ansatz, i.e., it is of the form
 \begin{equation}
 \sum_{\nu=1}^3 d{\bf x}_\nu^{\rm T}\Phi d{\bf x_\nu} +(dt+A)^{\rm T}\Phi^{-1}(dt+A),\label{metric}
 \end{equation}
 where $dt$ is the diagonal matrix of $1$-forms dual to Killing fields, $A$ is a connection $1$-form, and the matrix $\Phi$ (which determines the metric up to gauge equivalence) is given explicitly by
 \begin{equation*}
 \Phi_{ij}=\begin{cases}\displaystyle a_{\sgn(i)}+\sum_{k\neq i}\frac{s_{ik}}{\|{\bf x}^i-{\bf x}^k\|} & \text{if $i=j$},
 \\
 \displaystyle -\frac{s_{ij}}{\|{\bf x}^i-{\bf x}^j\|} & \text{if $i\neq j$},\end{cases}
 \end{equation*}
 where $s_{ij}=-\sgn(i)\sgn(j)$.

 There is one more building block, corresponding to matrices $A$, $B$. In our asymptotic region, these will become almost diagonal, so that this building block is $\oH^k$ with its standard flat metric and the diagonal torus action.

\subsection{Asymptotic coordinates and metric}
 \begin{figure}[ht]\centering
	\includegraphics[width=0.9\textwidth]{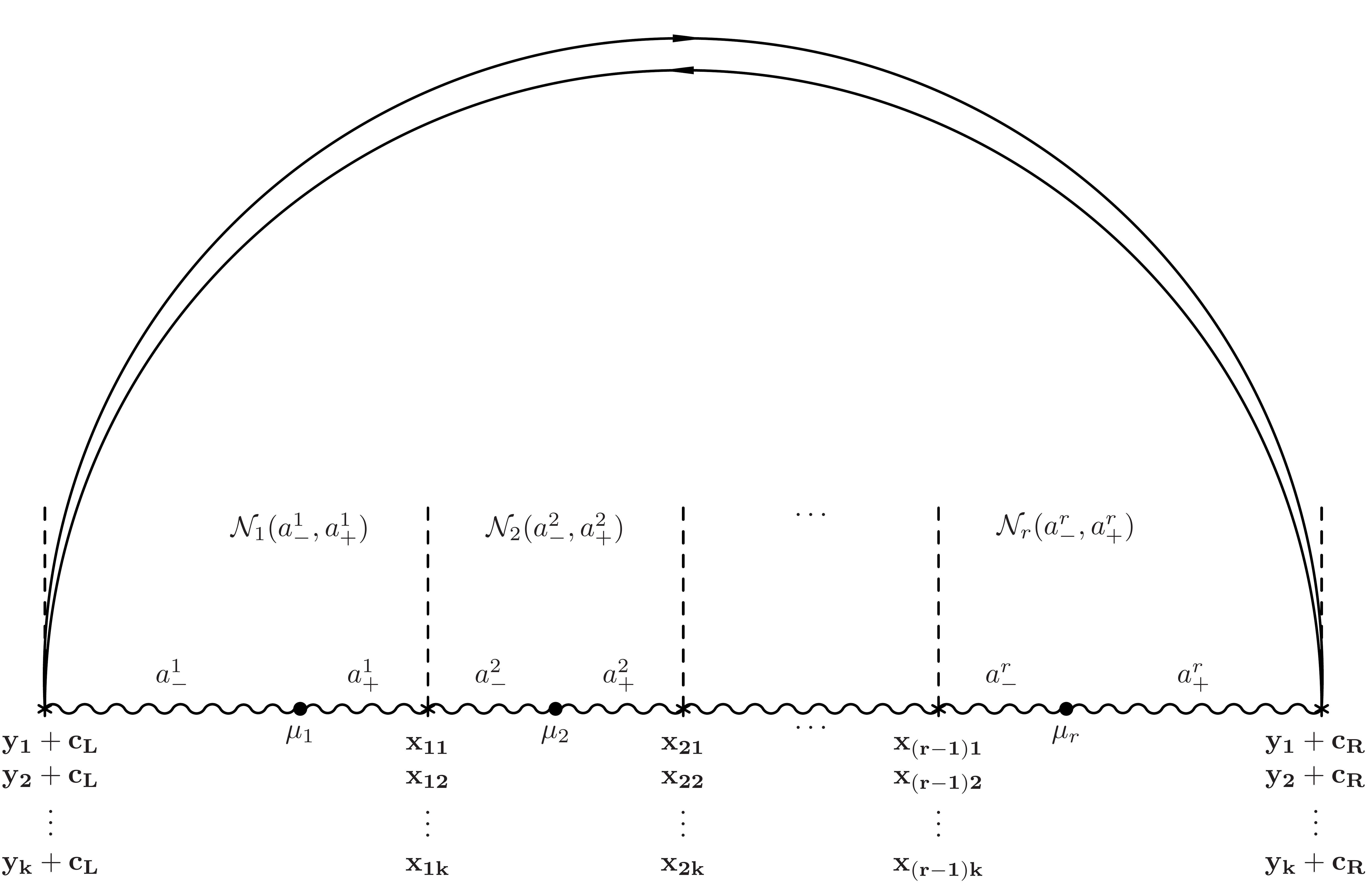}
\put(-211,260){\makebox(0,0)[lb]{\small$A$}}
\put(-212,236){\makebox(0,0)[lb]{\small$B$}}
	\caption{Bow asymptotic as hyperk\"ahler reduction of the approximation blocks. The bow interval is cut at crosses into subintervals, each containing a single $\lambda$-point $\mu_i$ with length $a_i^-$ to the left of $\mu_i$ and length $a_i^+$ to its right. The corresponding approximation space is $\sN_k(a_-^{i},a_+^i)$.}	
\end{figure}	

 We now obtain the asymptotic metric, analogously to \cite{BSU}, by gluing together these building blocks, i.e., performing the hyperk\"ahler quotient with respect to the torus.

 We start with the product $\prod_{i=1}^r\sN_k(a_-^{i},a_+^i) \times \oH^k$ with $a_+^i+a_-^{i+1}=\mu_{i+1}-\mu_i$ for $i=0,\dots,r$, where $a_+^0=a_-^{r+1}=0$. This hyperk\"ahler manifold has, as explained above, a tri-Hamiltonian action of $T^k\times T^k$ on each of the first $r$ factors and of $T^k$ on the last factor. Let us denote the torus $T^k\times T^k$ acting on $\sN_k(a_-^{i},a_+^i)$ by $T_i^-\times T_i^+$, where $T_i^-$ (resp.~$T_i^+$) is given by gauge transformations
 asymptotic to $\exp(a^i_\pm h-th)$ as $t\to -\infty$ (resp.~$t\to +\infty$), with $h\in \u(k)$. Let us also write $T^+_0$ for the standard torus action $(t,q)\mapsto\phi(t,q)$ on $\oH^k$, and $T^-_{r+1}$ for the action $(t,q)\mapsto \phi\big(t^{-1},q\big)$. We now perform the hyperk\"ahler quotient with respect to $\big(T^k\big)^{r+1}$, the $i$-th factor of which is embedded diagonally into $T_i^+\times T_{i+1}^-$,
 $i=0,\dots,r$. The level set of the hyperk\"ahler moment map is $(c_L,\dots,c_L)$ for the first copy of $T^k$, by $(c_R,\dots,c_R)$ for the last copy, and is equal to $0$ for all others (where $c_L$, $c_R$ are points in $\oR^3$ determined by the quadratic polynomials $c_L(\zeta),c_R(\zeta)$ used to define the bow variety $\sM$).

 The resulting metric is again of the form \eqref{metric}, where this time we have $kr$ points ${\bf x}^i\in \oR^3$: $k$~for each of the middle $r-1$ intervals and $k$ given by the moment map on each copy of $\oH$. Let us denote by ${\bf x}_{ij}$, $j=1,\dots,k$ the points corresponding to the interval
 $[\mu_i,\mu_{i+1}]$, $i=1,\dots,r-1$. Each ${\bf x}_{ij}$ is equal to ${\bf x}_j^+$ for $\sN_k(a_-^{i},a_+^i)$ and also to ${\bf x}_j^-$ for $\sN_k\big(a_-^{i+1},a_+^{i+1}\big)$. Let us also write ${\bf y}_1,\dots,{\bf y_k}\in\oR^3$ for the coordinates on each $\oH\backslash\{0\}$ given by the hyperk\"ahler moment map. The metric on $\oH$ can be also written in the form \eqref{metric} with $\Phi=\|{\bf y}\|^{-1}.$ Observe that ${\bf x}_j^-$ for $\sN_k\big(a_-^{1},a_+^1\big)$ (resp.~${\bf x}_j^+$ for $\sN_k(a_-^{r},a_+^r)$) satisfy ${\bf x}_j^-={\bf y}_j+c_L$ (resp.~${\bf x}_j^+={\bf y}_j+c_R$).

 The $kr\times kr$ matrix $\Phi$ defining the asymptotic metric is described as follows. Let $\Phi^i$, $i=1,\dots,r-1$, be the $2k\times 2k$ matrix describing the metric on $\sN_k(a_-^{i},a_+^{i})$. We decompose each $\Phi^i$ into $k\times k$ blocks (corresponding to the positive and negative superscripts labelling coordinates) as
\[
\begin{pmatrix} \Phi_{11}^i & \Phi^i_{12}\\
 \Phi_{21}^i & \Phi^i_{22}\end{pmatrix}.
 \]
Next, we form an $rk\times rk$-matrix $\Psi^i$ as follows: the matrix $\Psi^i$ has $k^2$ $r\times
 r$ blocks labelled by~$\Psi^i_{(m,n)}$, where, for $i\leq r-1$,
\[
\Psi^i_{(m,n)}=\begin{cases} \Phi_{st}^i & \text{if $m=i+s-2$ and $n=i+t-2$},\\
0& \text{otherwise}.\end{cases}
\]
 For $i=r$, set $\Psi^r_{(r,r)}=\Phi^r_{11}$, $\Psi^r_{(r,1)}=\Phi^r_{12}$, $\Psi^r_{(1,r)}=\Phi_{21}^r$, $\Psi^r_{(1,1)}=\Phi_{22}^r$, and the remaining blocks equal to $0$. Finally, let $\Psi^0$ have the $(1,1)$-block equal to $\diag\bigl(\|{\bf y_1}\|^{-1},\dots, \|{\bf y_k}\|^{-1}\bigr)$, and all other blocks equal to $0$. Then the matrix $\Phi$ for the asymptotic metric is the sum $\sum_{j=0}^r \Psi^j$ with ${\bf x}_j^-$ for $\sN_k(a_-^{1},a_+^1)$ and ${\bf x}_j^+$ for $\sN_k(a_-^{r},a_+^r)$ replaced by, respectively, ${\bf y}_j+c_L$ and ${\bf y}_j+c_R$.

 To recapitulate: the asymptotic metric is given by formula \eqref{metric} for the just defined $rk\times rk$ matrix $\Phi$ in coordinates ${\bf y}_1,\dots,{\bf y_k}$, ${\bf x}_{ij}$, $i=1,\dots, r-1$, $j=1,\dots,k$.
\begin{Remark} The asymptotic metric appears already, albeit in a different form, in \cite[Section~9]{Che2}. The setup we have just presented allows to prove easily that it is, indeed, the asymptotic metric on~$\sM$.
\end{Remark}

 Let now
 \[
 R=\min\bigl\{\|{\bf y}_m-{\bf y}_n\|,\|{\bf x}_{im}-{\bf x}_{in}\|;\: i=1,\dots,r-1,\; m,n=1,\dots,k,\; m\neq n\bigr\}.
 \]
 If $R\to \infty$, then the spectral curves become close to unions of lines.
 The proof that this metric is exponentially (in the parameter $R$) close to the metric on $\sM$ proceeds as in \cite[Theorem~9.1]{BSU}, with minor modifications (the main one being that we can solve the real Nahm equation with boundary conditions of $\sM$ since $R>0$ guarantees that the stability condition for the complex-symplectic quotient of $\tilde\sM\times T^\ast \Mat_{k,k}(\cx)$ (cf.\ Section~\ref{bow}) is satisfied).

\pdfbookmark[1]{References}{ref}
\LastPageEnding

\end{document}